\newtheoremstyle{plain}%
    {8pt plus2pt minus4pt}%
    {8pt plus2pt minus4pt}%
    {\itshape}%
    {}%
    {\bfseries\scshape}%
    {}%
    {6pt}% Space after theorem head
    {}%
\newtheoremstyle{remark}%
    {8pt plus2pt minus4pt}%
    {8pt plus2pt minus4pt}%
    {\upshape}% Body font
    {}%
    {\bfseries\scshape}%
    {}%
    {6pt}% Space after theorem head
    {}%
\theoremstyle{plain}
\newtheorem{thm}{Theorem}[section]
\newtheorem{cor}[thm]{Corollary}
\newtheorem{lem}[thm]{Lemma}
\newtheorem*{c1}{\textbf{Chernoff Bound}}
\newtheorem*{ub}{\textbf{Union Bound}}
\newtheorem*{lovasz}{\textbf{Lov\'asz Local Lemma}}
\newtheorem{que}[thm]{Question}
\newtheorem{obs}[thm]{Observation}
\theoremstyle{remark}
\title{On generalized Ramsey numbers of Erd\H{o}s and Rogers}
\author{{{Andrzej Dudek\thanks{Supported in part by Simons Foundation Grant \#244712.}}}\\
\small Department of Mathematics\\[-0.8ex]
\small Western Michigan University\\[-0.8ex]
\small Kalamazoo, MI\\
\small \texttt{andrzej.dudek@wmich.edu}
\and
{{Troy Retter}}\\
\small Department of Mathematics and\\[-1ex] 
\small Computer Science\\[-0.8ex]
\small Emory University\\[-0.8ex]
\small Atlanta, GA\\
\small \texttt{tretter@emory.edu}
\and
{{Vojt\v{e}ch R\"odl\thanks{Supported in part by NSF grant DMS 0800070.}}}\\
\small Department of Mathematics and\\[-1ex] 
\small Computer Science\\[-0.8ex]
\small Emory University\\[-0.8ex]
\small Atlanta, GA\\
\small \texttt{rodl@mathcs.emory.edu}}
\begin{document}
\maketitle		
		
\begin{abstract}
Extending the concept of Ramsey numbers, Erd{\H o}s and Rogers introduced the following function. For given integers $2\le s<t$ let
$$
f_{s,t}(n)=\min \{ \max \{ |W| : W\subseteq V(G) \mbox{ and } G[W] \mbox{ contains no } K_s\} \},
$$
where the minimum is taken over all $K_t$-free graphs~$G$ of order~$n$. In this paper, we show that for every $s\ge 3$ there exist constants $c_1=c_1(s)$ and $c_2=c_2(s)$ such that $f_{s,s+1}(n) \le c_1 (\log n)^{c_2} \sqrt{n}$. This result is best possible up to a polylogarithmic factor. We also show for all $t-2 \geq s \geq 4$, there exists a constant $c_3$ such that $f_{s,t}(n) \le c_3 \sqrt{n}$. In doing so, we partially answer a question of Erd\H{o}s by showing that $\lim_{n\to \infty} \frac{f_{s+1,s+2}(n)}{f_{s,s+2}(n)}=\infty$ for any $s\ge 4$.
\end{abstract}

%%%%%%%%%%%%%%%%%%%%%%%%%%%%%%%%%%%%%%%%%%%%%%%%%%%%%%%%%%%%
%%%%%%%%%%%%%%%%%%%%  INTRODUCTION   %%%%%%%%%%%%%%%%%%%%%%%
%%%%%%%%%%%%%%%%%%%%%%%%%%%%%%%%%%%%%%%%%%%%%%%%%%%%%%%%%%%%
\section{Introduction}
In a graph $G$, a set $S\subseteq V(G)$ is \emph{independent} if $G[S]$ does not contain a copy of $K_2$. More generally for any integer $s$, a set $S \subseteq V(G)$ can be called $s$\emph{-independent} if $G[S]$ does not contain a copy of $K_s$. With this in mind, define the $s$\emph{-independence number} of $G$, denoted by $\alpha_s(G)$, to be the size of the largest $s$-independent set in $G$. The classical Ramsey number $R(t,u)$ can be defined in this language as the smallest integer~$n$ such that every graph of order~$n$ contains either a copy of $K_t$ or a 2-independent set of size $u$. In other words, $R(t,u)$ is the smallest integer $n$ such that 
$$
u \leq \min \{\alpha_2(G): G \text{ is a } K_t \text{-free graph of order } n \}.
$$ 
Observe that if the right hand side of the above inequality is understood as a function of $n$ and $t$, then so is the classical Ramsey number.

A more general problem results by replacing the standard independence number by the $s$-independence number for some $2\le s<t$. Following this approach, in 1962 Erd{\H o}s and Rogers~\cite{ERog} introduced the function 
$$
f_{s,t}(n)=\min \{\alpha_s(G): G \text{ is a } K_t \text{-free graph of order } n \}.
$$
\noindent The lower bound $k \le f_{s,t}(n)$ means that every $K_t$-free graph of order~$n$ contains a subset of $k$ vertices with no copy of~$K_s$. The upper bound $f_{s,t}(n) < \ell$ means that there exists a $K_t$-free graph of order~$n$ such that every subset of $\ell$ vertices contains a copy of~$K_s$.

The case $t = s+1$ has received the considerable attention over the last 50 years, in part due to the fact that it creates a general upper bound in the sense that for $t' > t$, we clearly have $f_{s,t'}(n) \leq f_{s,t}(n)$. A first nontrivial upper bound for $f_{s,s+1}(n)$ was established by Erd{\H o}s and Rogers~\cite{ERog}, which was subsequently addressed by Bollob\'as and Hind~\cite{BH}, Krivelevich~\cite{KR2, KR}, Alon and Krivelevich \cite{AK}, Dudek and R\"odl \cite{DR}, and most recently Wolfovitz~\cite{Wo}. The first nontrivial lower bound established by Bollob\'as and Hind~\cite{BH} was later slightly improved by Krivelevich~\cite{KR}. The most recent general bounds for $s\geq 3 $ were of the form:
\begin{equation}\label{eq:bound_s}
\Omega\left(\sqrt{\frac{n \log n }{\log\log n}} \right) = f_{s, s+1}(n) = O\big{(}n^{\frac{2}{3}} \big{)}.
\end{equation}
The precise lower bound of \eqref{eq:bound_s} was first explicitly stated by Dudek and Mubayi~\cite{DM}, and was based upon their observation that the result of Krivelevich \cite{KR} could be slightly strengthened by incorporating a result of Shearer \cite{SH}. This upper bound of \eqref{eq:bound_s} appears in \cite{DR}, where it was also conjectured that for all sufficiently large $s$ the upper bound could be improved to show that
\begin{equation}\label{conj:1}
f_{s,s+1}(n) = n^{\frac{1}{2}+o(1)}.
\end{equation}
Recently, Wolfovitz~\cite{Wo} showed that \eqref{conj:1} holds when $s=3$. In this paper, \eqref{conj:1} is proved for every $s \geq 3$, establishing an upper bound that is tight up to a polylogarithmic factor. Our proof builds upon the ideas in \cite{Wo}, \cite{DR}, \cite{KR2}, and \cite{KR}.

\begin{thm}\label{thm:s1}
For every $s \geq 3$ there is a constant $c=c(s)$ such that 
$$
f_{s,s+1}(n) \le c (\log n)^{4s^2}\sqrt{n}.
$$
\end{thm}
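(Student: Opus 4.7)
The plan is to exhibit, via the probabilistic method, a $K_{s+1}$-free graph $G$ on $n$ vertices whose $s$-independence number is at most $m := c(\log n)^{4s^2}\sqrt{n}$. My starting point is the binomial random graph $G(N,p)$ with $N$ slightly larger than $n$ and $p$ of order $N^{-1/(s-1)}$ up to polylogarithmic factors. This is the critical density range where the expected number of $K_s$'s in an $m$-set is already large (in fact polylogarithmically large) while the global count of $K_{s+1}$'s is still small enough to be corrected by an alteration. This is the regime used in \cite{Wo} for $s=3$, and the task is to extend the analysis to general $s$.

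First I would prove that with very high probability, every vertex subset $W$ of size $m$ in $G(N,p)$ contains not merely one but many copies of $K_s$. The natural tool is Janson's inequality applied to the sum of the indicator variables of $K_s$-copies inside $W$: the expectation is at least $m^s p^{\binom{s}{2}}$, which by the choice of $p$ is a polylogarithmic function of $N$, and the clustering parameter $\Delta$ can be bounded routinely. Janson gives an exponentially small lower-tail probability, so a union bound over all $\binom{N}{m}$ subsets succeeds. The robust conclusion I want is that each $m$-set contains at least $(\log N)^{\Omega(s^2)}$ copies of $K_s$.

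Next I would convert $G(N,p)$ into a $K_{s+1}$-free graph by removing one edge from each copy of $K_{s+1}$, exactly as in the alteration step of \cite{Wo} and \cite{DR}. The danger is that these deletions could destroy every $K_s$ in some $m$-set. To control this, I would show (again by concentration followed by a union bound, absorbing another $\log$ factor per step) that a typical edge lies in at most a polylogarithmic number of $K_{s+1}$'s, and hence inside any fixed $m$-set $W$ the number of $K_s$-copies that share an edge with some $K_{s+1}$ is strictly less than the lower bound established above. Thus at least one $K_s$ in $W$ survives the alteration. Restricting to an arbitrary $n$-vertex subset then yields a graph certifying the desired inequality.

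The main obstacle is performing all of these estimates simultaneously with only a polylogarithmic loss. Each of the following steps costs a factor of $\log n$: (i) applying Janson's inequality in a form strong enough to survive a union bound over $\binom{N}{m}$ sets; (ii) bounding the number of $K_{s+1}$'s through each edge uniformly; (iii) showing that the $K_{s+1}$-participating edges inside $W$ are not too concentrated on a single $K_s$. Because the arguments must be nested (one needs concentration of $K_s$-counts conditional on the behavior of $K_{s+1}$-counts, and vice versa), the number of $\log$ factors grows roughly like the square of $s$. This is exactly the origin of the exponent $4s^2$ in the statement, and getting the accounting right — while ensuring that each deletion step is compatible with the other probabilistic guarantees — is the delicate part that refines the arguments in \cite{Wo}, \cite{DR}, \cite{KR2}, and \cite{KR}.
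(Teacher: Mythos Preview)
Your approach has a genuine gap: the claim that in $G(N,p)$ with $p$ of order $N^{-1/(s-1)}$ ``a typical edge lies in at most a polylogarithmic number of $K_{s+1}$'s'' is false once $s\ge 5$. The expected number of copies of $K_{s+1}$ through a fixed edge is on the order of
\[
N^{s-1}p^{\binom{s+1}{2}-1}\;=\;N^{(s-4)/2}
\]
at this density (up to polylog factors), which is a positive power of $N$ for $s\ge 5$. In particular, essentially every edge participates in some $K_{s+1}$, and the alteration step wipes out the graph rather than trimming it. The approach is borderline at $s=4$ and only genuinely viable at $s=3$, which is exactly the case handled in~\cite{Wo}; getting $\sqrt{n}$ for larger $s$ requires a new idea, not a refinement of the $G(n,p)$ bookkeeping.

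The paper circumvents this by abandoning $G(n,p)$ in favor of a structured host. It starts from the affine plane of prime order $q$ (so $|V|=q^2\approx n$), passes to a random sub-hypergraph $H$ by keeping each line with probability $(\log q)^2/q$, and then builds $G$ by replacing every surviving line with a random complete $s$-partite graph. Because $H$ is linear (any two vertices share at most one line) and has maximum degree $O((\log q)^2)$, any $K_{s+1}$ in $G$ is forced to use vertices in a very constrained pattern of intersecting lines; a short counting argument then gives that every edge of $G$ lies in at most a \emph{polylogarithmic} number of $K_{s+1}$'s (property~\ref{G:3}). This is the step your $G(n,p)$ model cannot reproduce. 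With that control in hand, the paper does not alter; instead it takes a further random edge-sparsification of $G$ and applies the Lov\'asz Local Lemma to the events ``this $K_{s+1}$ survives'' and ``this $64s\beta q$-set loses all of its prepared edge-disjoint $K_s$'s'', balancing the two families against each other. The $4s^2$ in the exponent comes from the parameters of this Local Lemma calculation, not from nested concentration bounds in $G(n,p)$.
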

\noindent

For the case $t = s+2$, it follows from a result of Sudakov~\cite{SU} (see also \cite{DR} for a simplified formula) that $f_{s,s+2}(n) = \Omega(n^{a_2})$, where $\frac{1}{a_2} = 2+\frac{2}{3s-4}$. On the other hand, clearly $f_{s,s+2}(n) \leq f_{s,s+1}(n)$. When $s \geq 4,$ we establish an improved upper bound that omits the logarithmic factor.
\begin{thm}\label{thm:s2}
For every $s \geq 4$ there is a constant $c=c(s)$ such that
$$
f_{s,s+2}(n) \le c\sqrt{n}.
$$
\end{thm}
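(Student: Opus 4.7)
The plan is to construct a $K_{s+2}$-free graph $G$ on $n$ vertices with $\alpha_s(G)\le c\sqrt n$. A pure $G(n,p)$ construction cannot work: the density needed to force a $K_s$ in every $c\sqrt n$-set (roughly $p>n^{-1/s}$, so that $\binom{\sqrt n}{s}p^{\binom{s}{2}}\gg\sqrt n\log n$) exceeds the density at which $K_{s+2}$'s become rare enough to delete (roughly $p<n^{-2/(s+2)}$), and $n^{-1/s}>n^{-2/(s+2)}$ for every $s\ge3$. A structured construction is therefore required.

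My plan is to begin with the $K_{s+1}$-free graph $G_0$ from Theorem~\ref{thm:s1} on a subset $V_0\subseteq[n]$ of size $N$, satisfying $\alpha_s(G_0)\le c_1(\log N)^{4s^2}\sqrt N$, and augment it on $W=[n]\setminus V_0$ with a random layer, both inside $W$ (edge probability $q$) and between $V_0$ and $W$ (edge probability $p$), exploiting the extra room afforded by only requiring $K_{s+2}$-freeness rather than $K_{s+1}$-freeness. The densities are tuned so that the expected number of $K_{s+2}$'s is $o(n)$ (all of them living across $V_0$ and $W$, since $G_0$ is $K_{s+1}$-free), allowing deletion of one vertex from each.

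To bound $\alpha_s$, take any $U\subseteq V(G)$ of size $c\sqrt n$ and split $U=U_0\cup U_W$. If $|U_0|>\alpha_s(G_0)$ then $G_0[U_0]$ already contains a $K_s$. Otherwise $|U_W|$ is comparable to $\sqrt n$, and the plan is to locate a $K_s$ either as an $(s-1)$-clique in $G_0[U_0]$ with a common neighbour in $U_W$ (using the random bipartite edges of density $p$), or entirely inside $G(W,q)[U_W]$. A Janson-style bound in each case shows that the failure probability is smaller than $\binom{n}{c\sqrt n}^{-1}$, and a union bound concludes.

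The main obstacle is the quantitative calibration of $N$, $|W|$, $p$, and $q$ required to simultaneously (i) keep $\alpha_s(G_0)=o(\sqrt n)$; (ii) produce a supersaturation of $(s-1)$-cliques in $G_0[U_0]$ whenever $|U_0|>\alpha_{s-1}(G_0)$; (iii) make the random layers dense enough to force the required $K_s$-configurations in every $c\sqrt n$-set via Janson's inequality; and (iv) keep the total $K_{s+2}$-count $o(n)$. Eliminating the polylogarithmic factor of Theorem~\ref{thm:s1} through this balance is the quantitative heart of the proof, and the hypothesis $s\ge4$ enters precisely where the exponent inequalities among the common-neighbour probability $p^{s-1}$, the $K_{s-1}$-supersaturation count, and the $K_{s+2}$ count become compatible; the case $s=3$ degenerates and would require a separate treatment in the spirit of~\cite{Wo}.
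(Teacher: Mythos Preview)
Your plan has a genuine structural gap that no calibration of $N$, $p$, $q$ can close. Consider the two regimes for $N=|V_0|$. If $N$ is small enough that $\alpha_s(G_0)\le c_1(\log N)^{4s^2}\sqrt N<c\sqrt n$, then $|W|=n-N$ is of order $n$, so there exist sets $U$ of size $c\sqrt n$ lying \emph{entirely} in $W$; for these $U_0=\emptyset$ and your only option is to find a $K_s$ inside $G(W,q)[U_W]$. That is precisely the pure random-graph construction you correctly ruled out in your first paragraph: the density $q$ needed to force a $K_s$ in every $\sqrt n$-subset of $W$ already produces $\Omega(n)$ copies of $K_{s+2}$ inside $W$ alone. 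If instead $N$ is close to $n$ (in particular $|W|<c\sqrt n$, so that every $U$ meets $V_0$), then $\alpha_s(G_0)\ge c_1(\log n)^{4s^2}\sqrt n\gg c\sqrt n$, and there exist sets $U\subseteq V_0$ of size $c\sqrt n$ with $U_W=\emptyset$ and no $K_s$ in $G_0[U]$; neither of your two mechanisms applies. The intermediate regime does not help: you always face one of these two obstructions. A secondary issue is that Theorem~\ref{thm:s1} as a black box gives no control on $\alpha_{s-1}(G_0)$ or on $(s-1)$-clique supersaturation, which your middle case requires.

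The paper takes a different route. Theorems~\ref{thm:s1} and~\ref{thm:s2} are both derived from the \emph{same} base graph $G$ of Lemma~\ref{lem:3}, built by replacing each line of a random sub-hypergraph of the affine plane by a random complete $s$-partite graph. Property~\ref{G:1} already gives $\alpha_s(G)\le 16sq=O(\sqrt n)$ with no logarithmic loss. The point of Theorem~\ref{thm:s2} is then purely to destroy all $K_{s+2}$'s, and this is done not by edge-sparsification but by a geometric argument: every $K_{s+2}$ in $G$ (for $s\ge4$, so that a $K_{s+2}$ has at least six vertices) must contain one of three ``dangerous'' incidence configurations of four points in general position plus one or two further points, and the expected number of such configurations is only $O(\alpha^8 q)=o(q^2)$. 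Removing one vertex from each dangerous set (property~\ref{G:4}) yields a $K_{s+2}$-free induced subgraph on $n$ vertices with $\alpha_s\le O(\sqrt n)$. In other words, in the paper's logic Theorem~\ref{thm:s2} sits \emph{upstream} of Theorem~\ref{thm:s1}: the polylogarithmic factor in Theorem~\ref{thm:s1} is introduced only later, by the extra random edge-deletion (and Local Lemma) needed to kill $K_{s+1}$'s, and it cannot be removed by post-processing the output of Theorem~\ref{thm:s1}.
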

\noindent
This establishes the following corollary which provides the best known bounds on $f_{s,t}(n)$ for $t < 2s$.
\begin{cor} \label{cor3}
For every $6 \leq s +2 \leq t$ there is a constant $c=c(s)$ such that
$$
f_{s,t}(n) \le f_{s,s+2}(n) \le c\sqrt{n}.
$$
\end{cor}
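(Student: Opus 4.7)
The plan is to derive this corollary essentially for free from Theorem \ref{thm:s2} together with the elementary monotonicity already noted in the introduction: for any $t' \ge t$ one has $f_{s,t'}(n) \le f_{s,t}(n)$. This monotonicity holds because every $K_t$-free graph is automatically $K_{t'}$-free when $t' \ge t$ (a copy of $K_{t'}$ in such a graph would contain a $K_t$ as a subgraph), so the minimum defining $f_{s,t'}(n)$ is taken over a superset of the class of graphs defining $f_{s,t}(n)$, and the minimum over a larger family can only be smaller.

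Given the hypothesis $6 \le s+2 \le t$ (so in particular $s \ge 4$), I would first apply this monotonicity observation with $s+2$ in the role of the smaller clique size to obtain
$$
f_{s,t}(n) \le f_{s,s+2}(n).
$$
Then Theorem \ref{thm:s2}, whose hypothesis $s \ge 4$ is exactly what the condition $6 \le s+2$ guarantees, supplies a constant $c = c(s)$ with $f_{s,s+2}(n) \le c\sqrt{n}$, which is the right-hand inequality of the corollary. Chaining the two bounds yields the claim.

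There is essentially no obstacle to overcome: all of the genuine content of the corollary sits inside Theorem \ref{thm:s2}, and what remains is merely to record how the upper bound on $f_{s,s+2}(n)$ propagates to every larger clique size $t$, producing a uniform $O(\sqrt{n})$ upper bound on $f_{s,t}(n)$ throughout the range $s \ge 4$, $t \ge s+2$.
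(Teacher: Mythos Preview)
Your argument is correct and matches the paper's reasoning exactly: the corollary is stated without a separate proof because it follows immediately from Theorem~\ref{thm:s2} together with the monotonicity $f_{s,t'}(n)\le f_{s,t}(n)$ for $t'\ge t$ already recorded in the introduction. There is nothing to add.
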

\noindent
When $t \geq 2s$, the upper bound $c(\log n)^{1/(s-1)} n^{s/(t+1)}$ of Krivelevich~\cite{KR2} remains best. For all values of $t>s+1$, the best lower bounds follow from a recursive formula defined by Sudakov~\cite{SU, SU2}. We will return to the these results concerning the general case in our concluding remarks. More related results are summarized in the survey~\cite{DR2}. 

Now that our two main results have been stated, we turn our attention towards an old question of Erd\H{o}s~\cite{ER}, who asked if for fixed integers $s+1 < t$,
\begin{equation}\label{eq:erdos}
\lim_{n\to\infty} \frac{f_{s+1,t}(n)}{f_{s,t}(n)} = \infty.
\end{equation}
This central conjecture in the area is still wide open and asks for a rather precise estimation of $f_{s,t}(n)$. It is known due to Sudakov~\cite{SU2} that \eqref{eq:erdos} holds for 
$$
(s,t)\in\{ (2,4), (2,5), (2,6), (2,7), (2,8), (3,6)\}.
$$
Observe that Theorem~\ref{thm:s2} together with the lower bound of \cite{KR} (and \cite{DR}) implies that for $s\ge 4$,
$$
 \frac{f_{s+1,s+2}(n)}{f_{s,s+2}(n)} \ge \frac{\Omega\left(\sqrt{\frac{n \log n }{\log\log n}} \right)}{O(\sqrt{n})}
= \Omega\left(\sqrt{\frac{\log n }{\log\log n}}\right)
\xrightarrow[n\to\infty]{}\infty.
$$
That is, \eqref{eq:erdos} holds for all pairs $(s,t) \in\{ (4,6), (5,7), (6,8), \dots \}$.

In what follows, consider $s$ to be an arbitrary fixed integer and $n$ sufficiently large, i.e. $n \geq n_0(s)$. We will show that there exists a $K_{s+1}$-free graph of order~$n$ such that every subset of $c (\log n)^{4s^2}\sqrt{n}$ vertices contains a copy of~$K_s$ and that there exists a $K_{s+2}$-free graph of order~$n$ such that every subset of $c \sqrt{n}$ vertices contains a copy of~$K_s$. Indeed, this establishes Theorems \ref{thm:s1} and \ref{thm:s2} as stated (for all $n$), since the constant factors can subsequently be inflated to accommodate the finitely many cases where $n \leq n_0$. For simplicity, we do not round numbers that are supposed to be integers either up or down; this is justified since these rounding errors are negligible to the asymptomatic calculations we will make.

In Section~\ref{sec:H}, we begin our construction by considering the random hypergraph $\mathbb{H}$ which is essentially the random hypergraph obtained from the affine plane by taking each hyperedge (line) with some uniform probability. We then use $\mathbb{H}$ in Section~\ref{sec:G} to construct a random graph $\mathbb{G}$ by replacing each hyperedge by a complete $s$-partite graph. In Section \ref{sec:proofs}, the proof of Theorem \ref{thm:s2} considers an induced subgraph of $\mathbb{G}$ whereas the proof of Theorem \ref{thm:s1} considers yet another random subgraph of $\mathbb{G}$ which is analyzed by way of the Local Lemma.

Below we will use the standard notation to denote the neighborhood and degree of $v \in G$ by $N_G(v)$ and $d_G(v)$ respectively.

%%%%%%%%%%%%%%%%%%%%%%%%%%%%%%%%%%%%%%%%%%%%%%%%%%%%%%%%%%%%
%%%%%%%%%%%%%%%%%%%%  THE HYPERGRAPH H  %%%%%%%%%%%%%%%%%%%%
%%%%%%%%%%%%%%%%%%%%%%%%%%%%%%%%%%%%%%%%%%%%%%%%%%%%%%%%%%%%

\section{The Hypergraph $H$} \label{sec:H}

The {\em affine plane} of order $q$ is an incidence structure on a set of $q^2$ points and a set of $q^2+q$ lines such that: any two points lie on a unique line; every line contains~$q$ points; and every point lies on~$q+1$ lines. It is well known that affine planes exist for all prime power orders.
Clearly, an incidence structure can be viewed as a hypergraph with points corresponding to vertices and lines corresponding to hyperedges; we will use this terminology interchangeably.

In the affine plane, call lines $L$ and $L'$ \emph{parallel} if $L \cap L' = \emptyset$. In the affine plane there exist $q+1$ sets of $q$ pairwise parallel lines. (For more details see, e.g., \cite{CA}.) Let $(V, \mathcal{L})$ be the hypergraph obtained by removing a parallel class of $q$ lines from the affine plane or order~$q$. The following lemma establishes some properties of this graph.

\begin{lem} \label{lem:0}
For $q$ prime, the $q$-uniform, $q$-regular hypergraph $(V,\mathcal{L})$ of order $q^2$ satisfies:
\begin{enumerate}[label=(P\arabic*)]
\item\label{P:1} Any two vertices are contained in at most one hyperedge;
\item\label{P:2} For every $A \in {V \choose q}$, $|\{L \in \mathcal{L} : L \cap A \not = \emptyset \}| \geq \frac{q^2}{2}$.
\end{enumerate}
\end{lem}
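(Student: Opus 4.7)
The hypergraph parameters are immediate from the affine-plane construction. The affine plane of order $q$ has $q^2$ points and $q^2+q$ lines partitioned into $q+1$ parallel classes, each of size $q$; removing one parallel class leaves $q^2$ lines. Each line in the plane contains $q$ points, which gives $q$-uniformity. Each point lies on $q+1$ lines, exactly one from each parallel class, so removing one parallel class leaves each point on $q$ lines, giving $q$-regularity. Property \ref{P:1} is also immediate: in the affine plane any two points lie on a unique line, and removing lines can only decrease this count, so any two points lie on at most one line of $\mathcal{L}$.

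For \ref{P:2}, the plan is a standard double-counting plus Cauchy--Schwarz argument. Fix $A \in \binom{V}{q}$ and let $\mathcal{L}_A = \{ L \in \mathcal{L} : L \cap A \neq \emptyset \}$, $m := |\mathcal{L}_A|$, and $a_L := |L \cap A|$ for each $L \in \mathcal{L}_A$. Double-counting incidences $(v,L)$ with $v \in A$ and $v \in L$, using $q$-regularity, yields
\[
\sum_{L \in \mathcal{L}_A} a_L \;=\; \sum_{v \in A} |\{L \in \mathcal{L} : v \in L\}| \;=\; q \cdot q \;=\; q^2.
\]
Double-counting pairs from $A$ on a common line and using \ref{P:1} gives
\[
\sum_{L \in \mathcal{L}_A} \binom{a_L}{2} \;\le\; \binom{q}{2},
\]
so $\sum_L a_L^2 = \sum_L a_L + 2\sum_L \binom{a_L}{2} \le q^2 + q(q-1) = 2q^2 - q$. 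Finally, Cauchy--Schwarz gives $(\sum a_L)^2 \le m \cdot \sum a_L^2$, hence
\[
m \;\ge\; \frac{q^4}{2q^2 - q} \;>\; \frac{q^2}{2},
\]
which is precisely \ref{P:2}.

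There is no real obstacle; the only thing to be careful about is that the $q^2$ in the double count truly uses $q$-regularity (so one has to verify the regularity first, which is why the preliminary structural observations come before the counting). Note also that the argument never used $q$ prime beyond the existence of the affine plane, so primality is merely a convenient way to guarantee the plane.
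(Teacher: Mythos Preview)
Your proof is correct, but it takes a different route from the paper for \ref{P:2}. The paper uses a direct greedy count: ordering $A=\{v_1,\dots,v_q\}$ and defining $d_+(v_i)$ to be the number of lines through $v_i$ that miss $\{v_1,\dots,v_{i-1}\}$, property \ref{P:1} gives $d_+(v_i)\ge q-i+1$, so $|\mathcal{L}_A|\ge\sum_i d_+(v_i)\ge\binom{q+1}{2}\ge q^2/2$. Your argument instead double-counts incidences and pairs, then applies Cauchy--Schwarz to get $m\ge q^4/(2q^2-q)>q^2/2$. The paper's version is shorter and entirely elementary, and in fact yields the slightly stronger bound $\binom{q+1}{2}$; your approach is a standard moment method that would adapt more readily if the hypergraph were not exactly regular or if one wanted to vary $|A|$. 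Either way the conclusion is the same, and your remark that primality is only used for the existence of the plane is accurate.
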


\begin{proof}
By construction, $(V,\mathcal{L})$ is $q$-uniform, $q$-regular, and satisfies \ref{P:1}. Consider $A=\{v_1,v_2,\dots ,v_q\}$. Define $d_{+}(v_i) = |\{ L \in \mathcal{L} : L \cap \{v_1,v_2,\dots, v_{i}\} = \{v_i\} \}|$. Hence by property~\ref{P:1}, $d_+(v_i) \geq q-i+1$. We now compute
$$ |\{L \in \mathcal{L} : L \cap A \not = \emptyset \}| \geq \sum_{i=1}^q d_+(v_i) \geq {q+1 \choose 2} \geq \frac{q^2}{2}.$$
\end{proof}

The objective of this section is to establish the existence of a certain hypergraph $(V,\mathcal{L}') \subseteq (V,\mathcal{L})$ by considering a random sub-hypergraph of $(V,\mathcal{L})$. Preceding this, we introduce some terminology. Define 
$$\mathcal{L}_A' = \{L \in \mathcal{L}' : L \cap A \not = \emptyset \}, \hspace{1cm} \text{ and } \hspace{1cm} \mathcal{L}'_{B, \gamma } = \{L \in \mathcal{L}' : |L \cap B| \geq \gamma \}.$$ 
Call $S \subseteq V$ $\mathcal{L}'$-\emph{complete} if every pair of points in $S$ is contained in some common line in $\mathcal{L}'$. Let $L(x,y)$ denote the unique line in $\mathcal{L}$ containing $x$ and $y$, provided such a line exists. 

We will now distinguish 3 types of \emph{$\mathcal{L}'$-dangerous} subsets as depicted in Figure \ref{fig:ks2}. The first two types have 5 vertices $\{v_1,v_2,v_3,v_4,x\}$ and third type has 6 vertices $\{v_1,v_2,v_3,v_4,y,z\}$. All 3 types of dangerous sets must be $\mathcal{L}'$-complete and have 4 points $\{v_1,v_2,v_3,v_4\}$ in general position. Additionally we specify:
\begin{description}
  \item[Type 1 $\mathcal{L}'$-dangerous] \hfill \\
  The points $\{v_1,v_2,v_3,v_4,x\}$ are in general position.
  \item[Type 2 $\mathcal{L}'$-dangerous] \hfill \\
  The point $x$ is contained in precisely one of the 6 lines $L(v_i,v_j)$ for $1 \leq i < j \leq 4$. Up to relabeling, say $x \in L(v_2,v_3)$.
  \item[Type 3 $\mathcal{L}'$-dangerous] \hfill \\
  The points $y$ and $z$ are each contained in exactly two of the lines $L(v_i,v_j)$ for $1 \leq i < j \leq 4$. Up to relabeling, say $y \in L(v_1,v_3) \cap L(v_2,v_4)$ and $z \in L(v_1,v_2) \cap L(v_3,v_4)$.
\end{description}
All concepts above were defined relative to the subset $\mathcal{L}' \subseteq \mathcal{L}$. Obviously we can define the concepts $\mathcal{L}$-complete, $\mathcal{L}$-dangerous, $\mathcal{L}_A$, and $\mathcal{L}_{B, \gamma }$ related to the set $\mathcal{L}$ analogously.

\begin{figure}
\begin{center}
\subfigure[Type 1]{\includegraphics[scale=0.4]{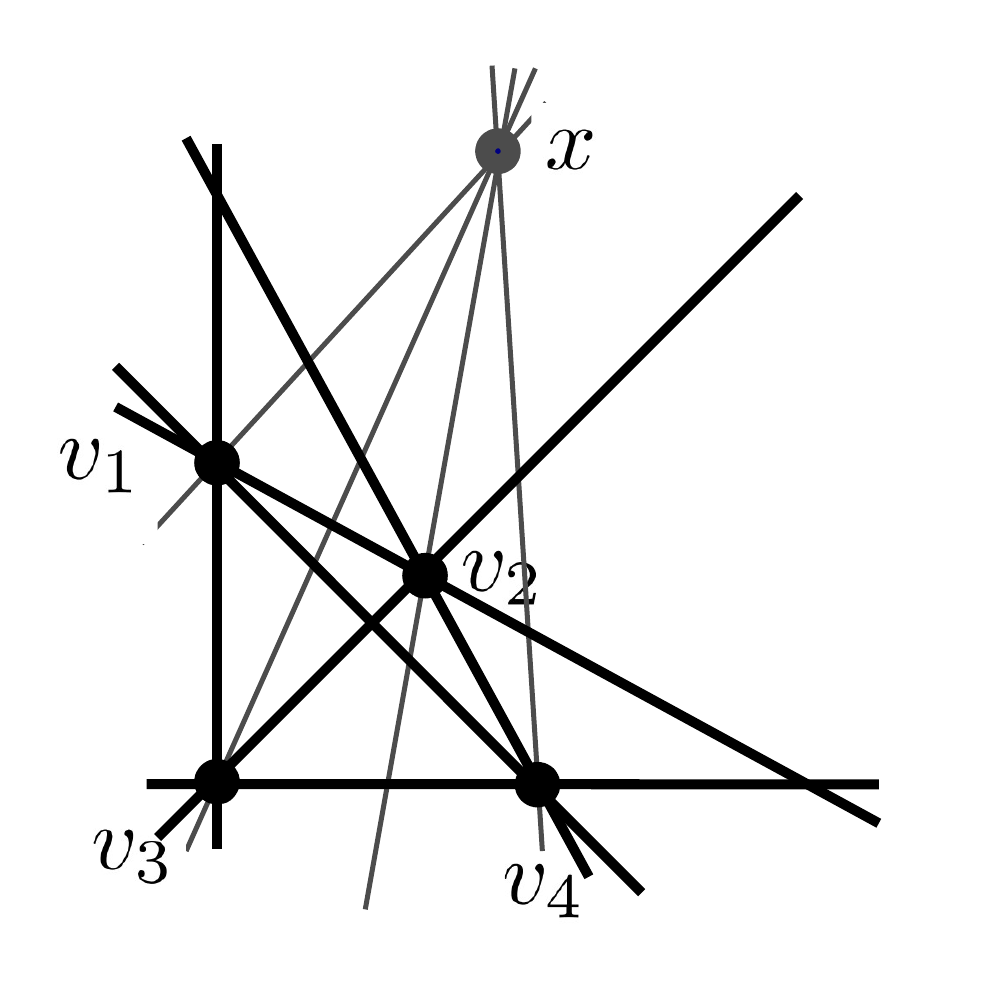}}
\qquad
\subfigure[Type 2]{\includegraphics[scale=0.4]{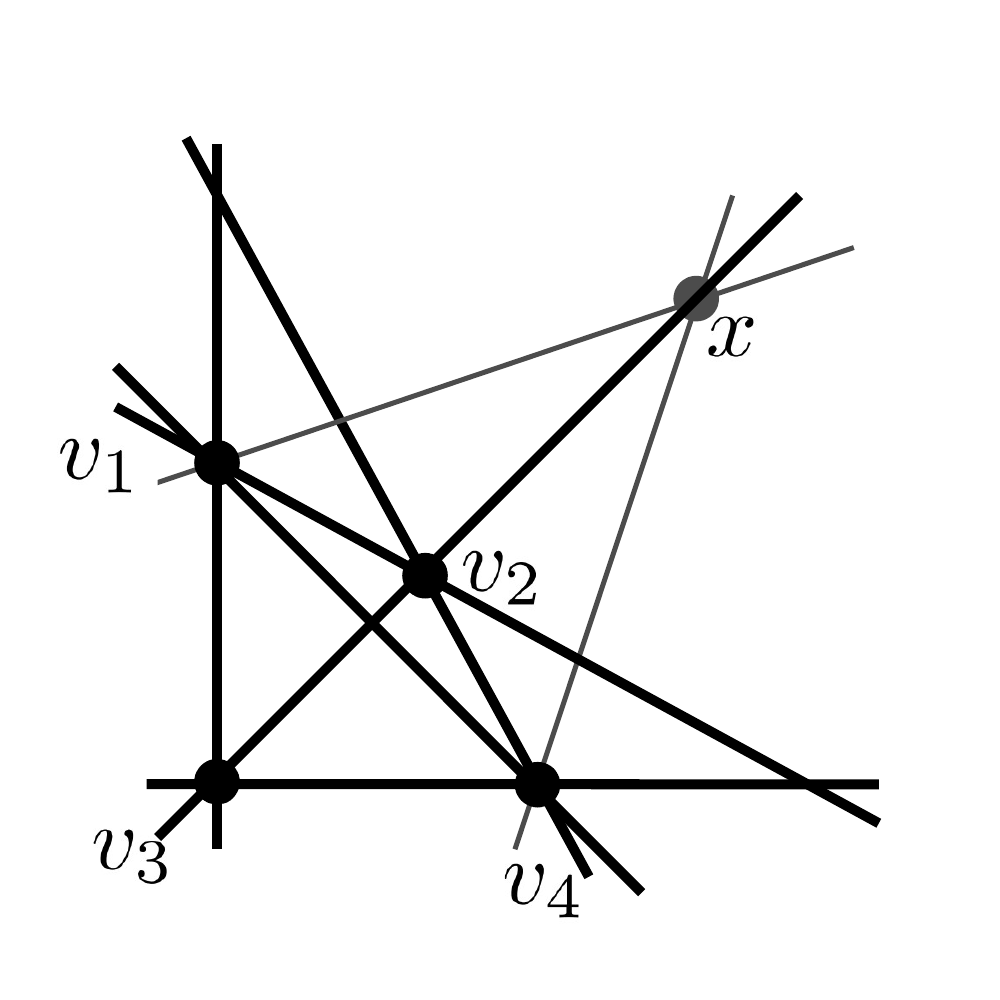}}
\qquad
\subfigure[Type 3]{\includegraphics[scale=0.4]{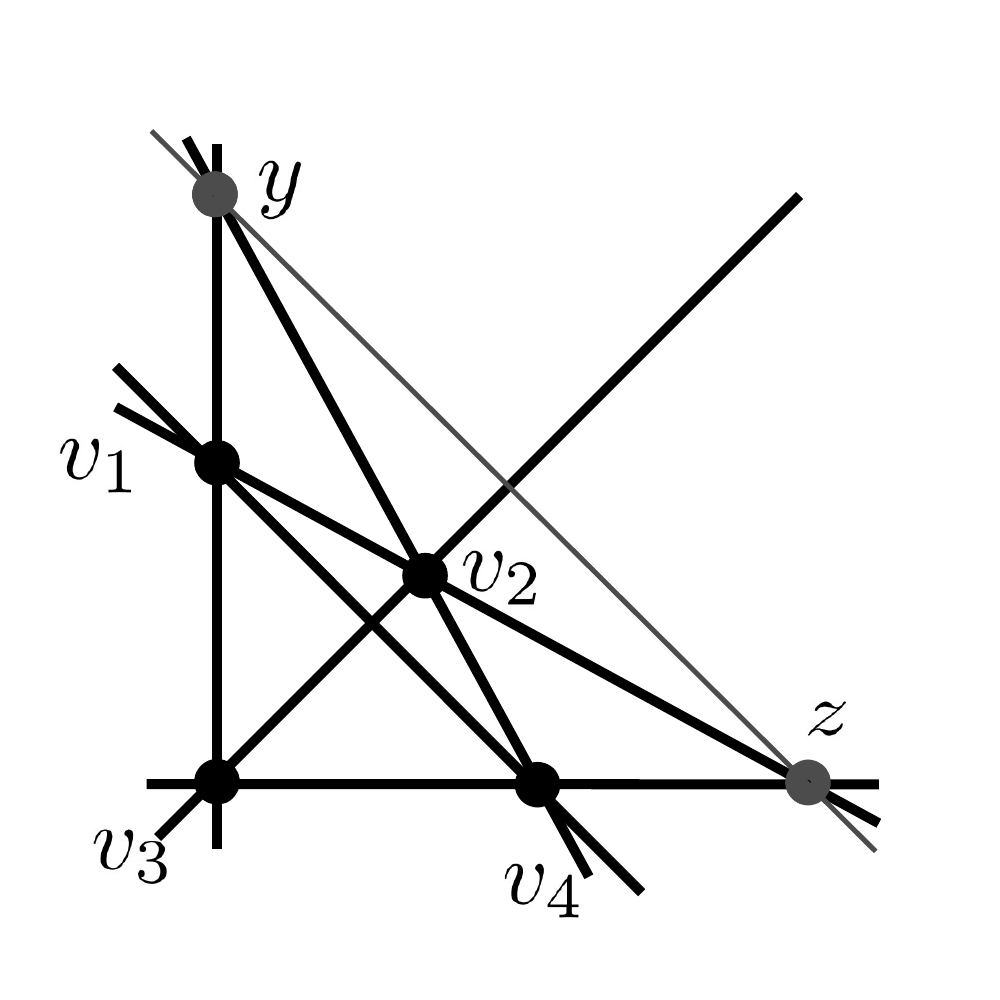}}
\caption{Types of dangerous sets.}
\label{fig:ks2}
\end{center}
\end{figure}

We are now ready to state the main result of this section.

\begin{lem} \label{lem:1}
Let $q$ be a sufficiently large prime and $\alpha = (\log q)^2$. 
Then, there exists a $q$-uniform hypergraph $H=(V,\mathcal{L}')$ of order $q^2$ such that:
\begin{enumerate}[label=(H\arabic*)]
\item\label{H:0} Any two vertices are contained in at most one hyperedge;
\item\label{H:3} For every $v \in V$, $d_H(v) \leq 2 \alpha$; 
\item\label{H:4} $|\mathcal{D}| \leq 2 \alpha^8 q$, where $\mathcal{D}$ is the set of $\mathcal{L}'$-dangerous subsets;
\item\label{H:1} For every $A \in {V \choose q}$, $|\mathcal{L}_A'|\geq \frac{\alpha q}{4}$;
\item\label{H:5} For every integer $1 \leq \gamma \leq \frac{q}{16}$ and every $B \in {V \choose 16 \gamma q}$, $|\mathcal{L}'_{B, \gamma }| \geq \frac{\alpha q}{8}$.
\end{enumerate}
\end{lem}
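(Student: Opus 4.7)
The plan is to take $\mathcal{L}'$ to be the random sub-hypergraph obtained by including each hyperedge of $\mathcal{L}$ independently with probability $p=\alpha/q$, and to show that with positive probability the resulting $H=(V,\mathcal{L}')$ satisfies all five properties. Property~\ref{H:0} is inherited directly from~\ref{P:1}. For~\ref{H:3}, note that $d_H(v)\sim\mathrm{Bin}(q,p)$ has mean $\alpha$, so a Chernoff upper tail combined with a union bound over the $q^2$ vertices is enough since $\alpha=(\log q)^2\gg\log q$. For~\ref{H:1}, property~\ref{P:2} gives $|\mathcal{L}_A|\geq q^2/2$, so $|\mathcal{L}'_A|$ is a binomial with mean at least $\alpha q/2$, and a Chernoff lower-tail bound of $e^{-\alpha q/16}$ beats the union bound $\binom{q^2}{q}\leq e^{2q\log q}$ for $\alpha\gg\log q$. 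I will also record from Chernoff that $|\mathcal{L}'|\leq 2\alpha q$ holds with probability $1-o(1)$; this global bound turns out to be crucial for~\ref{H:5}.

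For~\ref{H:4}, I would bound the expected number of dangerous sets type by type. A Type~1 configuration is given by $5$ vertices in general position and requires all $\binom{5}{2}=10$ spanned lines to lie in $\mathcal{L}'$, contributing $O(q^{10}p^{10})=O(\alpha^{10})$ in expectation. A Type~2 configuration is specified (up to a bounded symmetry) by an ordered general-position $4$-tuple plus a fifth point on one of the $6$ spanned lines, requires $8$ distinct lines in $\mathcal{L}'$, and contributes $O(q^9 p^8)=O(q\alpha^8)$. A Type~3 configuration is determined by its $4$-tuple alone (as $y$ and $z$ are forced as line intersections), needs $7$ distinct lines, and contributes $O(q^8 p^7)=O(q\alpha^7)$. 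Since $\alpha^2=(\log q)^4\ll q$, the total expectation is dominated by $O(q\alpha^8)$, and Markov's inequality gives $|\mathcal{D}|\leq 2\alpha^8 q$ with probability at least $1/2$.

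The crucial observation is that~\ref{H:5} can be derived \emph{deterministically} from~\ref{H:1} and the upper bound $|\mathcal{L}'|\leq 2\alpha q$ via a double-counting argument, bypassing any attempt at a union bound over $\binom{q^2}{16\gamma q}$. Fix $B$ and $\gamma$, and sum~\ref{H:1} over all $A\in\binom{B}{q}$:
\[
  \binom{|B|}{q}\cdot\frac{\alpha q}{4}\;\leq\;\sum_{A\in\binom{B}{q}}|\mathcal{L}'_A|\;=\;\sum_{L\in\mathcal{L}'}\left(\binom{|B|}{q}-\binom{|B|-|L\cap B|}{q}\right).
\]
For any line $L$ with $|L\cap B|\leq\gamma-1$, Bernoulli's inequality together with the calibration $|B|=16\gamma q$ gives
\[
  1-\frac{\binom{|B|-|L\cap B|}{q}}{\binom{|B|}{q}}\;\leq\;\frac{q(\gamma-1)}{|B|-q+1}\;<\;\frac{1}{16},
\]
while each line contributes at most $\binom{|B|}{q}$ to the right-hand side. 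Dividing through by $\binom{|B|}{q}$ and using $|\mathcal{L}'|\leq 2\alpha q$ gives $\alpha q/4 \leq |\mathcal{L}'|/16 + |\mathcal{L}'_{B,\gamma}| \leq \alpha q/8 + |\mathcal{L}'_{B,\gamma}|$, and hence $|\mathcal{L}'_{B,\gamma}|\geq\alpha q/8$.

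The main obstacle I anticipate is exactly property~\ref{H:5}: the count $\binom{q^2}{16\gamma q}$ grows like $2^{\Theta(q^2)}$ once $\gamma$ is a constant fraction of $q$, so no Chernoff-based union bound can beat it with the modest budget $\alpha=(\log q)^2$. The double counting above sidesteps this entirely by replacing the per-$B$ probabilistic argument with a single deterministic computation that uses only the already-verified property~\ref{H:1} and the global count of $|\mathcal{L}'|$; the specific constant $16$ in $|B|=16\gamma q$ is precisely what is needed to push the small-intersection contribution below $\binom{|B|}{q}/16$.
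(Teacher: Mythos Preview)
Your proposal is correct and follows essentially the same strategy as the paper: take each line of $(V,\mathcal{L})$ independently with probability $\alpha/q$, verify \ref{H:3} and \ref{H:1} via Chernoff plus a union bound, bound $E(|\mathcal{D}|)$ type by type and apply Markov for \ref{H:4}, and derive \ref{H:5} deterministically from \ref{H:1} together with the global bound $|\mathcal{L}'|\le 2\alpha q$.

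The only noteworthy difference is in the execution of the deterministic step for \ref{H:5}. The paper partitions $B$ into $16\gamma$ disjoint $q$-sets $B_1,\dots,B_{16\gamma}$, applies \ref{H:1} to each $B_i$, and double-counts edges in the auxiliary bipartite graph between the $B_i$'s and the lines of $\mathcal{L}'$; the bound $|\mathcal{L}'|\le 2\alpha q$ is obtained there as an immediate deterministic consequence of \ref{H:3}. You instead average \ref{H:1} over \emph{all} $q$-subsets $A\subseteq B$ and use Bernoulli's inequality to show that lines with $|L\cap B|<\gamma$ contribute less than $\tfrac{1}{16}\binom{|B|}{q}$ each, while you record $|\mathcal{L}'|\le 2\alpha q$ as a separate Chernoff event. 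Both double counts extract the same information with the same constants; the paper's partition argument is perhaps slightly more elementary (no binomial-ratio estimate needed), while your averaging argument makes more transparent why the constant $16$ is exactly calibrated.
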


Before proving the above lemma, we state a basic form of the Chernoff bound (as appearing in Corollary 2.3 of \cite{JLR}) and mention what we will refer to as the union bound.

\begin{c1}
If $X \sim \text{Bi}(n,p)$ and $0 < \varepsilon \leq \frac{3}{2}$, then
$$\Pr \Big( |X - E(X)| \geq \varepsilon \cdot E(X) \Big) \leq 2 \exp \left\{ -\frac{E(X) \varepsilon^2 }{3} \right\} .$$ %\leq  2 \exp \left\{ -\frac{np \varepsilon^2 }{3} \right\}
\end{c1}

\begin{ub}
If $E_i$ are events, then 
$$\Pr \Big( \bigcup_{i=1}^k E_i \Big) \leq k \cdot \max \{\Pr(E_i): i \in [k]\}.$$ %\leq \sum_{i=1}^k{\Pr(Y_i)}
\end{ub}

\begin{proof}[Proof of Lemma~\ref{lem:1}] Take $(V,\mathcal{L})$ to be a hypergraph established by Lemma \ref{lem:0}. Let $\mathbb{H}=(V,\mathcal{L}')$ be a random sub-hypergraph of $(V,\mathcal{L})$ where every line in $\mathcal{L}$ is taken independently with probability 
$$
\frac{\alpha}{q}=\frac{(\log q)^2}{q}.
$$
Since $\mathbb{H}$ is a subgraph of $(V, \mathcal{L})$ any two vertices are in at most one line, so $\mathbb{H}$ always satisfies \ref{H:0}. We will show $\mathbb{H}$ fails to satisfy \ref{H:3} and \ref{H:1} with probability at most $o(1)$ and that $\mathbb{H}$ fails to satisfy \ref{H:4} with probability at most $\frac{1}{2}$. Together this implies $\mathbb{H}$ satisfies \ref{H:0}-\ref{H:1} with probability at least $1-\frac{1}{2}-o(1)$, establishing the existence of a hypergraph $H$ that satisfies \ref{H:0}-\ref{H:1}. Finally, we use a counting argument to show that any such $H$ necessarily satisfies \ref{H:5}.

\emph{\ref{H:3}:} We first show that the probability that there exists a vertex of degree greater than $2 \alpha$ is $o(1)$. Observe for fixed $v \in \mathbb{H}$, $d_{\mathbb{H}}(v) \sim Bi(q, \frac{\alpha}{q})$ and $E(d_{\mathbb{H}}(v))= \alpha.$ So by the Chernoff bound with $\varepsilon = 1$, 
$$ \Pr \Big( d_{\mathbb{H}}(v) \geq 2 \alpha \Big) \leq \Pr \Big( |d_{\mathbb{H}}(v)-\alpha| \geq \alpha \Big) \leq 2  \exp \left\{ -\frac{\alpha}{3} \right\}. $$ 
Thus by the union bound the probability that there exists some $v \in V$ with $d_{\mathbb{H}}(v) \geq 2 \alpha$ is at most
$$q^2 \cdot 2  \exp \left\{-\frac{\alpha}{3}\right\} = 2 \exp \left\{2 \log q-\frac{(\log q)^2}{3}\right\} = o(1).$$

\emph{\ref{H:4}:} In order to show $|\mathcal{D}| > 4 \alpha^8 q$ with probability at most $\frac{1}{2}$, we begin by counting the number of $\mathcal{L}$-dangerous subsets of each type. Clearly the number of Type 1 $\mathcal{L}$-dangerous subsets is at most ${q^2 \choose 5}$. To count the number of Type 2 $\mathcal{L}$-dangerous subsets, first choose $\{v_1,v_2,v_3,v_4\}$ then $x$, observing $x$ must lie on one of the 6 lines which each have at most $q$ vertices. Thus there are at most ${q^2 \choose 4} (6q)$ configurations of this type. To count the number of Type 3 $\mathcal{L}$-dangerous subsets, observe the lines $L(v_i,v_j)$ for $1 \leq i < j \leq 6$ intersect at at most 3 points other than $v_1,v_2,v_3,v_4$. Hence there are at most ${q^2 \choose 4} {3 \choose 2}$ subsets of this type in $\mathcal{L}$.

Since $\mathcal{L}$-dangerous subsets of Type 1, Type 2, and Type 3 have 10, 8, and 7 lines respectively, an $\mathcal{L}$-dangerous subset of each type will be $\mathcal{L}'$-dangerous with respective probabilities $\left(\frac{\alpha}{q}\right)^{10}, \left(\frac{\alpha}{q}\right)^8$, and $\left(\frac{\alpha}{q}\right)^7$. By the linearity of expectation, we now compute 
$$ E(|\mathcal{D}|) \leq {q^2 \choose 5} \cdot \left( \frac{\alpha}{q} \right)^{10}+ {q^2 \choose 4} (6q) \cdot \left( \frac{\alpha}{q} \right)^{8} + {q^2 \choose 4} {3 \choose 2} \cdot \left( \frac{\alpha}{q} \right)^{7} \le \alpha^{10} + \frac{q \alpha^8}{4} + \frac{q \alpha^7}{8}  \leq q \alpha^8. $$

Thus, the Markov inequality yields,
$$\Pr \Big( |\mathcal{D}| \ge 2 \alpha^8 q \Big) \le \Pr \Big( |\mathcal{D}| \ge 2 E ( |\mathcal{D}|) \Big) \le \frac{1}{2}.$$

\emph{\ref{H:1}:} We will now prove that the probability that there exists $A \in {V \choose q}$ such that $|\mathcal{L}_A'| < \frac{\alpha q}{4}$ is $o(1)$. Begin by considering any fixed $A \in {V \choose q}$. Then by Lemma \ref{lem:0}, $|\mathcal{L}_A| \geq \frac{q^2}{2}$, so we may fix $X \subseteq \mathcal{L}_A$ with $|X|=\frac{q^2}{2}$. Let $X'= X \cap \mathcal{L}'$. Since each line in $X$ appears in $X'$ independently with probability $\frac{\alpha}{q}$, $|X'| \sim Bi(\frac{q^2}{2},\frac{\alpha}{q})$ and $E(|X'|)=\frac{\alpha q}{2}$. Hence by the Chernoff bound with $\varepsilon=\frac{1}{2}$, 
$$\Pr \Big( |\mathcal{L}_A'| < \frac{\alpha q}{4} \Big) \leq \Pr  \Big( |X'|< \frac{\alpha q}{4} \Big) \leq \Pr \Big( \Big| |X'|-\frac{\alpha q}{2} \Big|  \geq  \frac{\alpha q}{4} \Big) \leq 2\exp\left\{ - \frac{\alpha q}{24} \right\}.$$
Consequently by the union bound, the probability that there exits some $A \subseteq V$, $|A|=q$, with $|\mathcal{L}_A'| < \frac{\alpha q}{4} $ is at most

$$ {q^2 \choose q} \cdot 2\exp\left\{- \frac{\alpha q}{24} \right\} \leq q^{2q} \cdot 2\exp\left\{ - \frac{(\log q)^2 q}{24} \right\} = 2 \exp\left\{2q \log q - \frac{q(\log q)^2}{24} \right\}=o(1).$$

\emph{\ref{H:5}:} Finally, we will establish the following deterministic property: If $H$ satisfies \ref{H:3} and \ref{H:1}, then $H$ also satisfies \ref{H:5}.

Consider arbitrary fixed $0 \leq \gamma \leq \frac{q}{16}$ and $B \in {V \choose 16 \gamma q}$. Let $B=B_1 \cup B_2 \cup \dots \cup B_{16 \gamma}$ be a partition of $B$ into $16 \gamma$ sets of size $q$. Consider the auxiliary bipartite graph \emph{Aux} with bipartition $\{B_1, B_2, \dots, B_{16 \gamma}\} \cup \mathcal{L}'$. Join $B_i$ to $L \in \mathcal{L}'$ if $B_i \cap L \not = \emptyset$. By property \ref{H:1} $d_{Aux}(B_i) \geq \frac{\alpha q}{4}$ for all $i \in [16 \gamma]$, and thus the number of edges in \emph{Aux} satisfies 
\begin{equation}\label{eq:H5a}
|e(Aux)| \geq \frac{\alpha q}{4} |\{B_1, B_2, \dots, B_{16 \gamma}\}|=4 \alpha q \gamma.
\end{equation}
 On the other hand, clearly $d_{Aux}(L) \leq |\{B_1, B_2, \dots, B_{16 \gamma}\}| = 16 \gamma$ for all $L \in \mathcal{L}'$ and by definition $d_{Aux}(L') \leq \gamma$ for all $L' \in \{ \mathcal{L}' \setminus \mathcal{L}_{B, \gamma }' \}$. Also keeping in mind that by \ref{H:3} \newline
$| \mathcal{L}' \setminus \mathcal{L}_{B, \gamma }' | \leq |\mathcal{L}'| = \sum_{v \in V} \frac{d_H(v)}{q} \leq q^2\frac{2 \alpha}{q}=2 \alpha q$, we compute
\begin{equation}\label{eq:H5b}
|e(Aux)|  \leq |\mathcal{L}_{B, \gamma }'| \cdot 16 \gamma + |\{ \mathcal{L}' \setminus \mathcal{L}_{B, \gamma }' \}| \cdot \gamma \leq |\mathcal{L}_{B, \gamma }'| \cdot 16 \gamma + 2 \alpha q \cdot \gamma.
\end{equation}
Comparing (\ref{eq:H5a}) and (\ref{eq:H5b}), we obtain 
$$4 \alpha q \gamma \leq |e(Aux)|  \leq |\mathcal{L}_{B, \gamma }'| \cdot 16 \gamma + 2 \alpha q \gamma,$$
which yields $|\mathcal{L}'_{B, \gamma }| \geq \frac{\alpha q}{8}$.
\end{proof}

%%%%%%%%%%%%%%%%%%%%%%%%%%%%%%%%%%%%%%%%%%%%%%%%%%%%%%%%%%%%
%%%%%%%%%%%%%%%%%%%%  The Graph G %%%%%%%%%%%%%%%%%%%%%%%%%%
%%%%%%%%%%%%%%%%%%%%%%%%%%%%%%%%%%%%%%%%%%%%%%%%%%%%%%%%%%%%

\section{The Graph $G$}\label{sec:G}
Based upon the hypergraph $H$ established in the previous section, we will construct a graph $G$ with the following properties. 
\begin{lem} \label{lem:3}
Let $q$ be a sufficiently large prime, $\alpha = (\log q)^2$,  $\beta=(\log q)^{4s^2}$, and $s\ge 3$. Then, there exists a graph $G=(V,E)$ of order $q^2$ such that:
\begin{enumerate}[label=(G\arabic*)]
\item\label{G:1} For every $C \in {V \choose 16 s q}$, $G[C]$ contains a copy of $K_s$;
\item\label{G:2} For every $U \in {V \choose 64 s \beta q}$, $G[U]$ contains $\frac{\alpha \beta^2 q}{16}$ edge disjoint copies of~$K_s$;
\item\label{G:3} Every edge $xy \in E$ is in at most $6^s \alpha^{2s-2}$ copies of $K_{s+1}$;
\item\label{G:4} If $s\ge 4$, then $G$ can be made $K_{s+2}$ free by removing $2 \alpha^8 q $ vertices.
\end{enumerate}
\end{lem}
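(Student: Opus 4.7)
The approach I plan is to build $\mathbb{G}$ probabilistically from the hypergraph $H$ of Lemma~\ref{lem:1}: for each line $L\in\mathcal{L}'$ independently choose a uniformly random equipartition $L=P^1_L\cup\cdots\cup P^s_L$ into parts of size $q/s$, and declare $uv\in E(\mathbb{G})$ iff $u,v$ are contained in some (by (H0), unique) $L\in\mathcal{L}'$ with $u,v$ in different parts of its partition. I will verify that (G3) and (G4) are deterministic consequences of (H0), (H3), (H4), while (G1) and (G2) will be shown to hold with positive probability by a union bound over the subsets $C$ and $U$.

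For (G3) I plan to bound $K_{s+1}$'s containing a fixed edge $xy$ on its unique line $L=L(x,y)$ by splitting the remaining $s-1$ vertices into on-$L$ and off-$L$ classes. No $K_{s+1}$ can have more than $s$ vertices on $L$ (the partition has only $s$ parts), so the number $k$ of on-$L$ vertices lies in $\{2,\dots,s\}$. Each off-$L$ vertex $v$ is the unique intersection of a line through $x$ (other than $L$) with a line through $y$ (other than $L$); by (H3) and (H0) there are at most $(2\alpha)^2$ candidates. Each on-$L$ vertex $u$ beyond $x,y$ must also be adjacent to every off-$L$ vertex $v$, forcing $u\in L\cap L(u,v)$ for one of at most $2\alpha$ lines through $v$. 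The dominant contribution comes from $k=2$, giving $(4\alpha^2)^{s-1}$ ordered tuples, and summing over $k$ produces at most $s\cdot 4^{s-1}\alpha^{2s-2}\le 6^s\alpha^{2s-2}$.

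For (G4) the plan is to prove the structural statement that every $K_{s+2}\subseteq G$ contains some $\mathcal{L}'$-dangerous subset, after which deleting one vertex from each dangerous subset destroys every $K_{s+2}$ while removing at most $|\mathcal{D}|\le 2\alpha^8 q$ vertices. First I would locate four vertices $v_1,\dots,v_4\in V(K_{s+2})$ in general position; otherwise every $4$-subset has three collinear vertices, which via a pairs-to-lines counting argument using (H0), plus (in the borderline case $s=4$) the nonexistence of $\mathrm{STS}(6)$, yields a contradiction. Each of the remaining $s-2\ge 2$ extras $u$ lies on at most two of the six lines $L(v_i,v_j)$ by (H0) (two such lines sharing a $v_i$ would be two lines through $\{u,v_i\}$), and if on exactly two they form a disjoint pair. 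If some extra is on $\le 1$ of the six lines we obtain Type~1 or Type~2 with $\{v_1,\dots,v_4,u\}$; otherwise all extras lie on exactly two lines, and since (H0) forbids two extras sharing the same disjoint pair, two extras on distinct disjoint pairs give Type~3. I expect this structural case analysis to be the main obstacle, since it requires carefully using (H0) to preclude pathological clique configurations.

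For (G1) and (G2) I apply (H5) with $\gamma=s$ and $\gamma=4s\beta$ respectively. In (G1) each of the $\ge\alpha q/8$ lines with $|L\cap C|\ge s$ yields a $K_s$ in $G[C]$ with probability $\ge s!/s^s$, so per-$C$ failure probability $(1-s!/s^s)^{\alpha q/8}$ easily dominates $\binom{q^2}{16sq}\le e^{32sq\log q}$ since $\alpha q\gg sq\log q$. In (G2), on each of the $\ge\alpha q/8$ lines with $|L\cap U|\ge 4s\beta$ a Chernoff bound for a hypergeometric-type variable gives $|P^i_L\cap U|\ge 3\beta$ for all $i$ with probability $\ge 1-se^{-\beta/8}$; when this balance holds, the $s$-partite subgraph on the restrictions contains $\Omega(\beta^2)$ edge-disjoint $K_s$'s (via a Latin-transversal packing on parts of size $\ge 3\beta$), and disjointness across lines follows from (H0), producing $\ge\alpha\beta^2 q/16$ edge-disjoint $K_s$'s globally. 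Rather than a weak Chernoff bound on the number of unbalanced lines, I will use the sharper estimate $\Pr(\ge k\text{ bad})\le\binom{n}{k}p^k$ with $n=\alpha q/8$, $k=\alpha q/16$, $p=se^{-\beta/8}$, giving per-$U$ failure probability $\exp(-\Omega(\alpha\beta q))$, which swamps $\binom{q^2}{64s\beta q}\le e^{O(s\beta q\log q)}$ since $\alpha=(\log q)^2\gg s\log q$ for $q$ sufficiently large in terms of $s$. The union bound then yields $\mathbb{G}$ satisfying (G1)-(G4) simultaneously with positive probability.
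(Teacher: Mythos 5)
Your proposal tracks the paper's proof almost exactly: the paper also builds $\mathbb{G}$ by randomly $s$-partitioning each line of the hypergraph $H$ from Lemma~\ref{lem:1} (assigning each vertex of $L$ an independent uniform label in $[s]$ rather than forcing an exact equipartition, an immaterial difference), proves (G3) and (G4) deterministically from \ref{H:0}, \ref{H:3}, \ref{H:4} with the same counting of $K_{s+1}$'s through a fixed edge and the same reduction of every $K_{s+2}$ to a dangerous subset, and handles (G1) and (G2) via \ref{H:5} with $\gamma=s$ and $\gamma=4s\beta$, the same $\binom{n}{k}p^k$ bound on the number of unbalanced lines, and the same MOLS decomposition of $K_{2\beta,\dots,2\beta}$ into $\beta^2$ edge-disjoint copies of $K_s$. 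The one step you flag as the main obstacle --- locating four clique vertices in general position --- needs no pairs-to-lines counting or appeal to the nonexistence of an STS(6): if some line $L$ carries three clique vertices $p_1,p_2,p_3$, then since $\mathbb{G}[L]$ is $s$-partite at most $s$ of the $s+2$ clique vertices lie on $L$, so two clique vertices $a,b$ lie off $L$, and $\{a,b\}$ together with the (at least two) points of $\{p_1,p_2,p_3\}$ not on $L(a,b)$ are in general position by \ref{H:0}.
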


\begin{proof}  Fix a hypergraph $H=(V,\mathcal{L}')$ as established by Lemma~\ref{lem:1}. Construct the random graph $\mathbb{G}$ as follows. For every $L\in \mathcal{L}'$, let $\chi_L : L \to [s]$ be a random partition of the vertices of $L$ into $s$ classes, where for every $v \in L$, a class $\chi_L(v) \in [s]$ is assigned uniformly and independently at random. Then, let $xy \in E$ if $\{x,y\} \subseteq L$ for some $L \in \mathcal{L}'$ and $\chi_L(x) \not = \chi_L(y)$. Thus for every $L \in \mathcal{L}'$, $\mathbb{G}[L]$ is a complete $s$-partite graph with vertex partition $L = \chi^{-1}_L (1) \cup \chi^{-1}_L (2) \cup \dots \cup \chi^{-1}_L (s)$ (where the classes need not have the same size and the unlikely event that a class is empty is permitted). Observe that not only are $G_{H'}[L]$ and $G_{H'}[L']$ are edge disjoint for distinct $L, L' \in \mathcal{L}'$, but also that the partitions for $L$ and $L'$ were determined independently.

We will show $\mathbb{G}$ does not satisfy \ref{G:1} and \ref{G:2} with probability at most $o(1)$ and that $\mathbb{G}$ always satisfies \ref{G:3} and \ref{G:4}. Hence the probability that $\mathbb{G}$ satisfies properties \ref{G:1}-\ref{G:4} is at least $1-o(1)$, implying the existence of a graph $G$ described in the lemma.

\emph{\ref{G:1}:} Consider any $C \in {V \choose 16sq}$. We will bound the probability that $G[C] \not \supset K_s$. By \ref{H:5} with $\gamma = s$, the set of lines $\mathcal{L}'_{C,s}$ that intersect $C$ in at least $s$ vertices has cardinality $|\mathcal{L}'_{C,s}| \geq \frac{\alpha q}{8}$. For each $L \in \mathcal{L}'_{C,s}$, let $X_L$ be the event $K_s \not \subseteq \mathbb{G}[L \cap C]$. Since $|L \cap C| \geq s$ for all $L \in \mathcal{L}'_{C,s}$, $\Pr (X_L) \leq 1- \frac{s!}{s^s}$. By independence, 
$$\Pr \Big( K_s \not \in \mathbb{G}[C] \Big) \leq \Pr \Big( \bigcap_{L \in \mathcal{L}'_{C,s}} X_L \Big) \leq {\left( 1- \frac{s!}{s^s} \right) }^{|\mathcal{L}'_{C,s}|} \leq {\left( 1- \frac{s!}{s^s} \right) }^\frac{\alpha q}{8}\leq \exp { \left\{ - \frac{s!}{s^s}\frac{\alpha q }{8} \right\} }.$$
So by the union bound, the probability that there exists a subset of $16 s q$ vertices in $\mathbb{G}$ that contains no $K_s$ is at most
$$ 
{q^2 \choose 16 s q} \exp { \left\{ - \frac{s!}{s^s}\frac{\alpha q }{8} \right\} }
\le  q^{16 s q} \exp { \left\{ - \frac{s!}{s^s}\frac{\alpha q }{8} \right\} }
= \exp\left\{16 s q \log q - \frac{s! q (\log q)^2}{8s^s} \right\} = o(1).
$$

\emph{\ref{G:2}:} For arbitrary $U \in {V \choose 64 s \beta q}$, we will bound the probability that $G[U]$ does not contain $\frac{\alpha \beta^2 q}{16}$ edge disjoint copies of $K_s$. By \ref{H:5} with $\gamma = 4 s \beta$, we may fix a subset $\mathcal{Z}_U \subseteq \mathcal{L}_{U,4 s \beta}'$ of exactly $\frac{\alpha q}{8}$ lines with the property that each line has intersection at least $4 s \beta$ with $U$. We will consider the lines in $\mathcal{Z}_U$ that contain the complete balanced $s$-partite graph on $2s \beta$ vertices, which we denote by $K_{2\beta,\dots,2\beta}$. Define $\mathcal{Z}_U' = \{L \in \mathcal{Z}_U : K_{2\beta,\dots,2\beta} \subseteq \mathbb{G}[L \cap U] \}$. The graph $K_{2\beta,\dots,2\beta}$ certainly contains at least $\beta^2$ edge disjoint $K_s$ (Since we may choose a prime $\beta \leq p \leq 2\beta$ and it follows from \cite{ACD} that we may then decompose $K_{p,\dots,p}$ into $p^2$ edge disjoint copies of $K_s$; this suffices for our purposes, but stronger results are know). Thus if we show $|\mathcal{Z}_U'| \geq \frac{\alpha q}{16}$ it will imply that $\mathbb{G}[U]$ contains at least $|\mathcal{Z}_U'| \cdot \beta^2 \geq \frac{\alpha \beta^2 q}{16}$ edge disjoint copies of $K_s$.

For $L \in \mathcal{Z}_U$, let $Y_L$ be the event that $L \not \in \mathcal{Z}_U'$ and fix $L_{4 s \beta} \subseteq L \cap U, |L_{4 s \beta}|=4s \beta$. Now $Y_L$ will occur only if $|\chi_{L}^{-1}(i) \cap L_{4 s \beta}| < 2 \beta$ for some $i \in [s]$. Defining $X_i = |\chi_{L}^{-1}(i) \cap L_{4 s \beta}|$, observe $X_i  \sim Bi(4s\beta, \frac{1}{s})$ and $E(X_i) = 4 \beta$. Chernoff's inequality reveals 
$$\Pr \Big( X_i < 2 \beta \Big) \leq \Pr \Big( |X_i - E(X_i)| \geq \frac{E(X_i)}{2} \Big) \leq 2 \exp\left\{-\frac{4 \beta }{12}\right\}=2 \exp\left\{-\frac{\beta }{3}\right\}.$$
By the union bound, $\Pr ( Y_L ) \leq \Pr \Big( \bigcup_{i \in s} (X_i \leq 2 \beta)  \Big) \leq s \cdot 2 \exp\left\{-\frac{ \beta }{3}\right\}.$

By independence, the probability that $Y_L$ occurs for at least $\frac{\alpha q}{16}=\frac{|\mathcal{Z}_U|}{2}$ of the lines in $\mathcal{Z}_U$ is at most

$$\binom{|\mathcal{Z}_U|}{\frac{|\mathcal{Z}_U|}{2}} \left( 2s \exp\left\{-\frac{ \beta}{3}\right\} \right)^{|\mathcal{Z}_U| / 2} 
\le 4^{|\mathcal{Z}_U| / 2} \left( 2s \exp\left\{-\frac{ \beta}{3}\right\} \right)^{|\mathcal{Z}_U| / 2}  = \left( 8s \exp\left\{-\frac{ \beta}{3}\right\} \right)^{\frac{\alpha q}{16}}.$$

That is, we have shown $|\mathcal{Z}_U'| < \frac{\alpha q}{16}$ with probability at most $\left( 8s \exp\left\{-\frac{\beta}{3}\right\} \right)^{\frac{\alpha q}{16}}$ for fixed~$U$. Thus by the union bound, the probability that there exits some $U \subseteq V$ with $|U|= 64 s \beta q$ such that $|\mathcal{Z}_U'| < \frac{\alpha q}{16}$  is at most

\begin{align*}
{{q^2} \choose {64 s \beta q}} \left( 8s \exp\left\{ - \frac{\beta}{3}  \right\} \right)^{\frac{\alpha q}{16}} 
&\le q^{64s \beta q } ( 8s )^{\frac{\alpha q}{16}} \left( \exp\left\{ - \frac{\beta}{3}  \right\} \right)^{\frac{\alpha q}{16}} \\
&\leq  \exp \left\{64 s \beta q \log q + \frac{\alpha q}{16} \log(8s) -  \frac{\alpha \beta q }{48} \right\}=o(1).
\end{align*}

\emph{\ref{G:3}:} For any $xy \in E$, we will show the number of copies of $K_{s+1}$ that contain $xy$ is at most $6^s \alpha^{2s-2}$.
\begin{figure}[h!]
\centering
\includegraphics[scale=0.5]{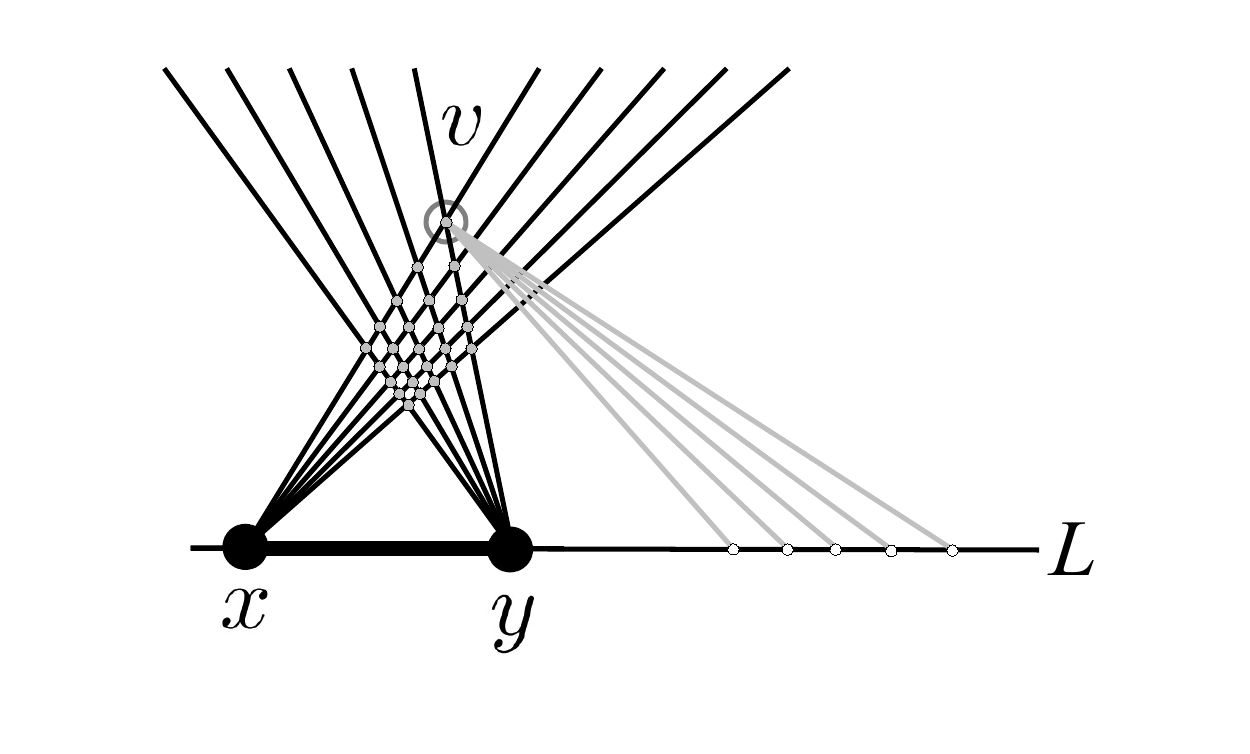}
\caption{Counting $K_{s+1}$ in $\mathbb{G}$ that contains a fixed edge $xy$ by considering lines in $H$.}
\label{fig:ks1}
\end{figure}
Let $L \in \mathcal{L}'$ be the unique line such that $\{x,y\} \subseteq L$ as depicted in Figure~\ref{fig:ks1}. Let $N = (N_H(x) \cap N_H(y)) \setminus L$ be the set of all vertices not on $L$ that are collinear with both $x$ and $y$. Since $d_H(x) , d_H(y) \leq  2\alpha$ by \ref{H:3}, we infer that $|N| \leq 4{\alpha}^2$. Because $K_{s+1} \not \subseteq G[L]$, if a $K_{s+1}$ is to contain $x$ and $y$ it must contain at least one vertex $v \in N$. There are at most $|N| \le 4{\alpha}^2$ choices for this vertex $v$. Once $v$ has been chosen, each of the remaining $s-2$ vertices of the $K_{s+1}$ must lie in $N$ or in $L \cap N_H(v)$. Since $|N|+ |L \cap N_H(v) | \leq 4{\alpha}^2+2 \alpha$, the number of $K_{s+1}$ containing the edge $xy$ is at most ${4 \alpha}^2 (4{\alpha}^2+2\alpha)^{s-2} \leq 6^s \alpha^{2s-2}$.

\emph{\ref{G:4}:} We will finally show that if $s \geq 4$, $G$ can be made $K_{s+2}$ free be removing at most $2 \alpha^8 q $ vertices. By \ref{H:4}, all $\mathcal{L}'$-dangerous sets can be destroyed by removing $2 \alpha^8 q$ vertices, so it suffices to shown that every $K_{s+2}$ in $\mathbb{G}$ contains a $\mathcal{L}'$-dangerous subset.

Let $K$ be any copy of $K_{s+2}$ in $\mathbb{G}$. By assumption $s \geq 4$, so $K$ must have at least 6 vertices, which clearly form a $\mathcal{L}'$-complete set.

We first show that $K$ contains 4 vertices in general position. Suppose otherwise. Then there is some line $L \in \mathcal{L}'$ that contains 3 vertices $\{p_1,p_2,p_3\}$ of $K$. Since $K_{s+1} \not \subseteq \mathbb{G}[L]$, there must exist two vertices $a$ and $b$ in $K \setminus L$. Observe $\{a,b\}$ and any 2 vertices in $\{p_1,p_2,p_3\} \setminus L(a,b)$ are in general position.

Now fix 4 vertices $\{v_1,v_2,v_3,v_4\}$ of $K$ that are in general position and let $u_1,u_2$ be any two other vertices of $K$. Three cases are now considered. If either $u_1$ or $u_2$ do not lie on any of the 6 lines $L(v_i,v_j)$ for $1 \leq i < j \leq 4$, then there is a $\mathcal{L}'$-dangerous subset of Type~1. If either $u_1$ or $u_2$ lie on exactly one line in $L(v_i,v_j)$ for $1 \leq i < j \leq 4$, then there is a $\mathcal{L}'$-dangerous subset of Type~2. In the remaining case where both $u_1$ and $u_2$ each lie on at least 2 lines in $L(v_i,v_j)$ for $1 \leq i < j \leq 4$, then there is a $\mathcal{L}'$-dangerous subset of Type~3.
\end{proof}

%%%%%%%%%%%%%%%%%%%%%%%%%%%%%%%%%%%%%%%%%%%%%%%%%%%%%%%%%%%%
%%%%%%%%%%%%%%%%%%%%  Proof of Theorem  %%%%%%%%%%%%%%%%%%%%
%%%%%%%%%%%%%%%%%%%%%%%%%%%%%%%%%%%%%%%%%%%%%%%%%%%%%%%%%%%%

\section{Proof of Theorem \ref{thm:s1} and \ref{thm:s2}}\label{sec:proofs}

Consider any sufficiently large integer $n$ and $s \geq 3$. By Bertrand's postulate, we can find a prime $q$ such that $4n \leq q^2 \leq 16n$. Fix a graph $G$ procured by Lemma \ref{lem:3} of order $q^2$ and as before take
$$
\alpha = (\log q)^{2} \hspace{1cm} \text{and} \hspace{1cm} \beta = (\log q)^{4s^2}.
$$
Theorem~\ref{thm:s1} and Theorem~\ref{thm:s2} are now proved by considering different subgraphs of $G$ of order $n$.

\begin{proof}[Proof of Theorem \ref{thm:s2}]
Consider the case where $s \geq 4$. To prove the theorem, we will show there exists a $K_{s+2}$-free induced subgraph of $G$ of order $n$ with the property that every subset of order $64 s \sqrt{n}$ contains a copy of $K_s$.

By \ref{G:1}, every set of size $16  s  q $ in $G$ contains $K_s$, so certainly every subset of size $64 s \sqrt{n} \geq 16  s  q$ in any induced subgraph of $G$ must also contain a copy of $K_s$.  Thus it will suffice to show that there is a $K_{s+2}$-free subset of $G$ of order $n$. But by \ref{G:4}, we know that there is a set $R \subseteq V(G)$ of size $|R|=2 \alpha^8 q \leq n$ such that $G [V \setminus R]$ will be $K_{s+2}$-free. Finally since $|V \setminus R | \geq 4n - n \geq n$, the induced graph of $G$ on any $n$ vertices in $V \setminus R$ will have the desired properties.
\end{proof}

\begin{proof}[Proof of Theorem \ref{thm:s1}]
For $s \geq 3$, we will concentrate on constructing a $K_{s+1}$-free graph $G'$ on $q^2$ vertices with the property that every subset of size $64s \beta q$ vertices contains a copy of $K_s$. Since $\log(4n)\le 2\log n $,
$$
64s \beta q = 64 s (\log q)^{4s^2} q \leq 
64 s (\log 4n)^{4s^2} 4n \leq
2^{4s^2+8} (\log n)^{4s^2} n,
$$ 
and so any induced subgraph of $G'$ of order $n$ will also be $K_{s+1}$-free and have the property that every set of order $2^{4s^2+8} (\log n)^{4s^2} n$ contains a copy of $K_s$, exactly as desired.

Let $G'$ be a random subgraph of $G$ where each edge is taken with probability 
$$
\frac{1}{\gamma}, \quad \text{where}\ \gamma = (\log q)^{8}.
$$ 
For a set $S \in {V(G) \choose s+1}$ that spans a copy of $K_{s+1}$ in $G$, let $A_S$ to be the event that all the edges of $S$ are in $G'$. Hence, $\bigcap{\overline{A_S}} $ means that $K_{s+1} \not \subseteq G'$. For a set $U \in {V(G) \choose 64 s \beta q}$ let $\mathcal{K}_U$ be a (fixed) set of 
$$
m=\frac{1}{16}\alpha\beta^2q
$$ 
edge disjoint copies $K_s$ contained in $U$, which are known to exist by \ref{G:2}. Define $B_U$ to be the event that none of the $m$ edge disjoint $K_s$ appear in $G'$. Hence, $ \bigcap \overline{B_U} $ implies that for every $U \in {V(G) \choose 64 s \beta q}$ one of the disjoint copies of $K_s$ in $G[U]$ appears in $G'$. It will suffice to show that the probability that $\left(\bigcap{\overline{A_S}} \right) \cap  \left( \bigcap \overline{B_U} \right)$ occurs is nonzero. In order to show this, we apply the Local Lemma (see, e.g., Lemma 5.1.1 in~\cite{AS}). 

\begin{lovasz}
Let $E_1, E_2,\dots, E_k$ be events in an arbitrary probability space. A directed graph $D$ on the set of vertices $\{1,2,\dots,k\}$ is called a {\em dependency digraph} for the events $E_1, E_2,\dots, E_k$ if for each $i$, $1\le i \le k$, the event $E_i$ is mutually independent of all the events $\{E_j : (i, j) \not \in D\}$. Suppose that $D$ is a dependency digraph for the above
events and suppose there are real numbers $z_1,\dots, z_k$ such that $0 \le z_i < 1$ and
$Pr(E_i) \le z_i \prod_{(i,j)\in D} (1-z_j)$ for all $1\le i\le k$. Then, 
$\Pr\left(\bigcap_{i=1}^k {\overline{E_i}} \right) > 0$.
\end{lovasz}

Let $D$ be a dependency graph that corresponds to all events $A_S$ and $B_U$.
Observe that $A_S$ depends only on the ${s+1 \choose 2}$ edges in $S$ and $B_U$ depends only on the $m {s \choose 2}$ edges of the $K_s$ in $\mathcal{K}_U$. Also, observe that the number of events of the type $B_U$ is ${q^2 \choose 64 s \beta q} \leq q^{64 s \beta q}$. Thus by~\ref{G:3}, a fixed event $A_S$ depends on at most 
$$
d_{AA} = {s+1 \choose 2}  6^s \alpha^{2s-2} 
$$
other events $A_{S'}$ and at most
$$
d_{AB} = q^{64 s \beta q} 
$$ %{s+1 \choose 2} n^{320 \beta \sqrt{n}}
events $B_U$. Similarly, a fixed event $B_U$ depends on at most 
$$
d_{BA} = m {s \choose 2}  6^s \alpha^{2s-2}
$$
events $A_S$ and at most
$$
d_{BB} = q^{64 s \beta q} 
$$ % m {s \choose 2}  n^{320 \beta \sqrt{n}} 
other events $B_{U'}$. Let 
$$
x= \frac{1}{\alpha^{2s^2}}\quad \text{and} \quad y= \frac{1}{(\log q)^{4s^2} q^{64 s \beta q}}.
$$
To finish the proof, due to the Local Lemma it suffices to show that 
\begin{equation}\label{eq:lllx}
{\left( \frac{1}{\gamma} \right)}^{s+1 \choose 2} \leq x(1-x)^{d_{AA}}(1-y)^{d_{AB}},
\end{equation}
\begin{equation}\label{eq:llly}
\left(1-\left( \frac{1}{\gamma} \right)^{s \choose 2}\right)^{m} \leq y (1-x)^{d_{BA}}(1-y)^{d_{BB}}.
\end{equation}

First we show that \eqref{eq:lllx} holds. Using the fact that $e^{-2x} \leq 1-x$ for $x$ sufficiently small (observe that $x \to 0$ with $q \to \infty$), a sufficient condition for \eqref{eq:lllx} will be
$$
{\left( \frac{1}{\gamma} \right)}^{s+1 \choose 2} \leq x\, e^{-2xd_{AA}}\, e^{-2yd_{AB}},
$$
and equivalently,
$$
{s+1 \choose 2} \log \left( \gamma \right) \geq \log \left( \frac{1}{x} \right)+2xd_{AA}+2yd_{AB}.
$$
The latter immediately follows from the following three inequalities (which can be easily verified):
\begin{align*}
&\frac{2s^2}{2s^2+2s}{s+1 \choose 2} \log \left( \gamma \right) \geq \log \left( \frac{1}{x} \right),\\ 
&\frac{s}{2s^2+2s} {s+1 \choose 2} \log \left( \gamma \right) \geq 2xd_{AA},\\
&\frac{s}{2s^2+2s} {s+1 \choose 2} \log \left( \gamma \right) \geq 2yd_{AB}.
\end{align*}

Similarly, using the facts that $e^{-2y} \leq 1-y$ for $y$ sufficiently small and that  $1-\left( \frac{1}{\gamma} \right)^{s \choose 2} \leq e^{-\left( \frac{1}{\gamma} \right)^{s \choose 2}}$, \eqref{eq:llly} will be satisfied if
$$
e^{-m {\left( \frac{1}{\gamma} \right)}^{s \choose 2}}  \le  y\, e^{-2xd_{BA}}\, e^{-2yd_{BB}},
$$
and equivalently,
$$
m {\left( \frac{1}{\gamma} \right)}^{s \choose 2}  \geq  \log {\left( \frac{1}{y} \right) } +2xd_{BA} +2yd_{BB}.
$$
As before the latter will follow from the following easy to check inequalities: 
\begin{align*}
&\frac{1}{3} m {\left( \frac{1}{\gamma} \right)}^{s \choose 2}  \geq \log  {\left( \frac{1}{y} \right) },\\
&\frac{1}{3} m {\left( \frac{1}{\gamma} \right)}^{s \choose 2}  \geq  2xd_{BA},\\
&\frac{1}{3} m {\left( \frac{1}{\gamma} \right)}^{s \choose 2}  \geq  2yd_{BB}.
\end{align*}
This completes the proof of Theorem~\ref{thm:s1}.
\end{proof}

%%%%%%%%%%%%%%%%%%%%%%%%%%%%%%%%%%%%%%%%%%%%%%%%%%%%%%%%%%%%
%%%%%%%%%%%%%%%%%%%%  Proof of Theorem  %%%%%%%%%%%%%%%%%%%%
%%%%%%%%%%%%%%%%%%%%%%%%%%%%%%%%%%%%%%%%%%%%%%%%%%%%%%%%%%%%

\section{Concluding Remarks}

We close this paper by discussing how the asymptotic behavior of $f_{s,t}(n)$ changes for different values of $3 \le s <t$. 

If the difference between $s$ and $t$ is fixed, we make the following observation based upon the lower bound in Sudakov~\cite{SU} (and Fact 3.5 in~\cite{DR}) and Corollary \ref{cor3}.
\begin{obs}
For any $\varepsilon > 0$ and an integer $k\ge 2$ there is a constant $s_0=s_0(k, \varepsilon)$ such that for all $s\ge s_0$,
$$
\Omega\big{(}n^{\frac{1}{2} - \varepsilon}\big{)} = f_{s,s+k}(n) = 
O(\sqrt{n}).
$$
\end{obs}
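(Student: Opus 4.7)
The observation has two parts, and both follow essentially directly from results already appearing in the paper. The upper bound is a one-line consequence of Corollary \ref{cor3}: since $k \ge 2$, choosing $s_0 \ge 4$ guarantees $6 \le s+2 \le s+k$ for every $s \ge s_0$, so
\[
f_{s,s+k}(n) \le f_{s,s+2}(n) \le c\sqrt{n},
\]
which is the stated $O(\sqrt{n})$ bound.

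For the lower bound the plan is to invoke Sudakov's recursive lower bound from \cite{SU, SU2} (in the simplified form of Fact~3.5 in \cite{DR}), which yields $f_{s,s+k}(n) = \Omega(n^{a_{s,k}})$ for an explicit exponent $a_{s,k}$. In the quoted case $k=2$ one has $\frac{1}{a_{s,2}} = 2 + \frac{2}{3s-4}$, and a direct computation gives
\[
a_{s,2} = \frac{1}{2} - \frac{1}{6s-6} \xrightarrow[s\to\infty]{} \frac{1}{2}.
\]
The same qualitative behavior persists for every fixed $k \ge 2$: the Sudakov recursion writes $1/a_{s,s+k}$ as $2$ plus a correction term that vanishes as $s \to \infty$ with $k$ held fixed (a rational function in $s$ whose denominator is a polynomial of positive degree). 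Hence $a_{s,k} \to \tfrac{1}{2}$.

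Given this asymptotic, the conclusion is immediate. For any $\varepsilon > 0$ and integer $k \ge 2$, choose $s_0 = s_0(k,\varepsilon) \ge 4$ large enough that $a_{s,k} \ge \tfrac{1}{2} - \varepsilon$ for every $s \ge s_0$. Then Sudakov's bound gives $f_{s,s+k}(n) = \Omega(n^{1/2-\varepsilon})$ while Corollary \ref{cor3} supplies the matching $O(\sqrt{n})$, yielding the observation.

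The only step that goes beyond quoting earlier results is verifying that the Sudakov exponent $a_{s,s+k}$ converges to $\tfrac{1}{2}$ as $s \to \infty$ with $k$ fixed. I expect this to be the only technical point of the argument, and to amount to a short induction on $k$ applied to the recursion in Fact~3.5 of \cite{DR}, using the base case $k=2$ computed above; no new combinatorial work beyond unpacking that recursion should be required.
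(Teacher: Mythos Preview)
Your proposal is correct and follows precisely the approach the paper indicates: the upper bound is taken directly from Corollary~\ref{cor3}, and the lower bound from Sudakov's recursive bound (Fact~3.5 in~\cite{DR}), with the observation that the resulting exponent tends to $\tfrac{1}{2}$ as $s\to\infty$ for fixed $k$. The paper itself gives no further argument beyond citing these two ingredients, so your write-up in fact supplies more detail than the original.
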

\noindent
In view of this observation and Theorem~\ref{thm:s2} we ask the following.
\begin{que}
For any $s\ge 3$, is $f_{s,s+2}(n) =  o(\sqrt{n})$?
\end{que}

Another interesting question results from fixing that ratio between $s$ and $t$. The following is based upon \cite{SU} and \cite{KR2} respectively. 
\begin{obs}\label{obs:2}
For any $\varepsilon > 0$ and $\lambda \geq 2$ there is a constant $ s_0=s_0(\lambda, \varepsilon)$ such that for all $s\ge s_0$,
$$\Omega\big{(}n^{\frac{1}{2\lambda} - \varepsilon}\big{)} = f_{s,\lfloor \lambda s \rfloor}(n) = O\big{(}n^{\frac{1}{\lambda}}\big{)}.$$
\end{obs}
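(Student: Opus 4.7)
The observation is a direct consequence of the two cited prior results, applied with $t = \lfloor \lambda s \rfloor$; the only work is to check the asymptotic behavior of their exponents as $s \to \infty$ with $\lambda$ fixed.

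For the upper bound, since $\lambda \ge 2$ we have $\lfloor \lambda s \rfloor \ge 2s$ once $s \ge s_0$, so Krivelevich's bound~\cite{KR2} quoted in the paragraph following Corollary~\ref{cor3} gives
$$
f_{s, \lfloor \lambda s \rfloor}(n) \;\le\; c (\log n)^{1/(s-1)} \, n^{s/(\lfloor \lambda s \rfloor + 1)}.
$$
Because $\lambda s - \lfloor \lambda s \rfloor < 1$ strictly, the exponent $s/(\lfloor \lambda s \rfloor + 1)$ is strictly less than $1/\lambda$, with a positive gap $\eta = \eta(s, \lambda)$. For $n$ large, the polylog factor $(\log n)^{1/(s-1)}$ is then dominated by $n^{\eta}$, so $f_{s, \lfloor \lambda s \rfloor}(n) = O(n^{1/\lambda})$ with implicit constant depending on $s$ and $\lambda$.

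For the lower bound, Sudakov's recursive argument~\cite{SU} (in the cleaner form stated in~\cite{DR}) gives $f_{s,t}(n) = \Omega(n^{a(s,t)})$ for an explicit exponent $a(s,t)$. The plan is to evaluate the limit
$$
\lim_{s \to \infty} a(s, \lfloor \lambda s \rfloor) \;=\; \frac{1}{2\lambda},
$$
and then pick $s_0 = s_0(\lambda, \varepsilon)$ large enough that $a(s, \lfloor \lambda s \rfloor) \ge 1/(2\lambda) - \varepsilon$ whenever $s \ge s_0$. The intuition is that Sudakov's induction passes to the neighborhood of a high-degree vertex, reducing $t$ by one and $n$ by some power, and iterating roughly $(\lambda - 1)s$ times reaches the base case $f_{s, s+1}(n) = \Omega(n^{1/2 - o(1)})$; the accumulated exponent loss over these steps should tend to a factor $1/\lambda$, giving the final exponent $1/(2\lambda)$.

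The main obstacle is carrying out the limit analysis rigorously from Sudakov's recursion. While the value $1/(2\lambda)$ is natural from the heuristic above, one must unwind the closed form of $a(s,t)$ from~\cite{SU} (or~\cite{DR}) and estimate the rate of convergence in order to extract the quantitative dependence $s_0 = s_0(\lambda, \varepsilon)$. Since the recursion is already known and well-understood, this should be a routine asymptotic calculation, but it is the only step beyond direct quotation of the cited results.
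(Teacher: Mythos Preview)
Your approach matches the paper's exactly: the observation is stated without proof, merely attributed to \cite{SU} for the lower bound and \cite{KR2} for the upper bound, and you have correctly identified and fleshed out both citations. Your analysis of the upper-bound exponent (that $s/(\lfloor \lambda s\rfloor+1)<1/\lambda$ strictly, so the polylog is absorbed) and your plan for the lower bound (taking the limit of Sudakov's exponent $a(s,\lfloor\lambda s\rfloor)$) are precisely what the paper leaves implicit.
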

\noindent
In particular, when $\lambda =3$, we see $\Omega(n^{1/6 - \varepsilon}) = f_{s,\lfloor \lambda s \rfloor}(n) = O(n^{1/3}) $.
\begin{que}
What is the asymptotic behavior of $ f_{s,\lfloor \lambda s \rfloor}(n)$?
\end{que}

Recall that Erd\H{o}s~\cite{ER} asked if for fixed $s+2\le t$, $\lim_{n\to\infty} \frac{f_{s+1,t}(n)}{f_{s,t}(n)} = \infty$. We ask a similar question, that if answered in the affirmative would imply an affirmative answer to the question of Erd\H{o}s.
\begin{que}
For all $t>s\ge 3$, is $\lim_{n\to\infty} \frac{f_{s+1,t+1}(n)}{f_{s,t}(n)} = \infty$?
\end{que}

%%%%%%%%%%%%%%%%%%%%%%%%%%%%%%%%%%%%%%%%%%%%%%%%%%%%%%%%%%%%
%%%%%%%%%%%%%%%%%%%%  BIBLIOGRAPHY  %%%%%%%%%%%%%%%%%%%%%%%%
%%%%%%%%%%%%%%%%%%%%%%%%%%%%%%%%%%%%%%%%%%%%%%%%%%%%%%%%%%%%

\end{document}